\documentclass [12pt] {article}
\usepackage{amsmath,amssymb,amsbsy,amsthm}
\usepackage[utf8]{inputenc}
\usepackage{graphicx}
\usepackage{multirow}
\usepackage{tikz}
\usepackage{tkz-graph}
\usetikzlibrary{shapes}
\tikzstyle{vertex}=[circle, draw, inner sep=2pt, minimum size=6pt]

\sloppy

\newtheorem{lemma} {\indent Lemma}
\newtheorem{theorem} {\indent Theorem}
\newtheorem{cor} {\indent Corollary}

\title{On Generalized Edge Corona Product of Graphs }%[Short Article Title]
\author{Irandokht Rezaee Abdolhosseinzadeh, Freydoon Rahbarnia \\ Ferdowsi University of Mashhad, Iran}
%\classnbr{5C15, 05C22, 05C25, 05C76, 05C78}

%% do not change the next command
%% \setarticle{1}{xx}{yyyy}

%\keywords{ Generalized edge corona product; domination number; chromatic number; vertex cover}

%\classnbr{05C15, 05C69, 05C76}

%\thanks{$*$Corresponding author} 

%\footnote{$^*$ Corresponding author}
\begin{document}
\maketitle

\begin{abstract}
Let $G$ be a simple graph with $m$ edges and $H_i$,  $1\leq i\leq m$,
 be simple graphs too. The generalized edge corona product of graphs $G$ and $H_1,..., H_m$, 
 denoted by $G\diamond (H_1, ..., H_m)$, is obtained 
 by taking one copy of graphs $G, H_1, ..., H_m$ and joining two end vertices of $i-$th
  edge of $G$ to every vertex of $H_i$,  $1\leq i\leq m$. In this paper, some results regarding
  the $k-$distance chromatic number of generalized edge corona product of graphs are presented.
 Also, as a consequence of our results, we compute this invariant for the graphs
     $K_n\diamond (H_1, ..., H_m)$, $T\diamond (H_1, ..., H_m)$ and 
$K_{m,n}\diamond (H_1, ..., H_m)$. Moreover, the dominating set, the domination number and the independence number 
of any connected graph $G$ and arbitrary graphs $H_i$, $1 \leq i \leq |E(G)|$, 
are evaluated under generalized edge corona operation. 
\end{abstract}

\section{Introduction}
Let $G$ be a simple graph with vertex
set $V(G) = \{v_1, v_2, ... , v_n\}$ and edge set $E(G) = \{e_1, e_2, ... , e_m\}$. 
We denote the shortest distance between two vertices $v_i$ and $v_j$ in $G$ by 
$d_G(v_i, v_j)$. The \emph{eccentricity} $\varepsilon(v)$ of a vertex $v$ in  $G$
 is defined as the maximum distance between $v$ and any other vertex of $G$. 
 The maximum eccentricity over all vertices of $G$ is called the \emph{diameter} 
 of $G$ and denoted by $D(G)$ and the minimum eccentricity among all vertices 
 of $G$ is called the \emph{radius} of $G$ and denoted by $r(G)$.
 
 Let $G$ be a simple graph with $m$ edges and $H_1, H_2,…, H_m$ be $m$ simple graphs. The \emph{generalized edge corona product}, denoted by $G\diamond (H_1, ...,H_m)$, is the graph obtained by taking one copy of graphs $G, H_1, H_2,…,H_m$ and then joining two end-vertices of the $i-$th edge $e_i$ of  of $G$ to every vertex of $H_i$  for $1\leq i\leq m$ \cite{Lu}.  In particular, if 
$H_1, ..., H_m$ are isomorphic graphs, then the generalized edge corona becomes
 to the well-known \emph{edge corona product} of two graphs $G$ and $H$ denoted
  by $G\diamond H$ \cite{Chi, Ho, Re}. As an example of edge corona of two graphs see Figure \ref{fig1}.

% The position of fig1
\begin{figure}[h]
\center{
\includegraphics[scale=0.3]{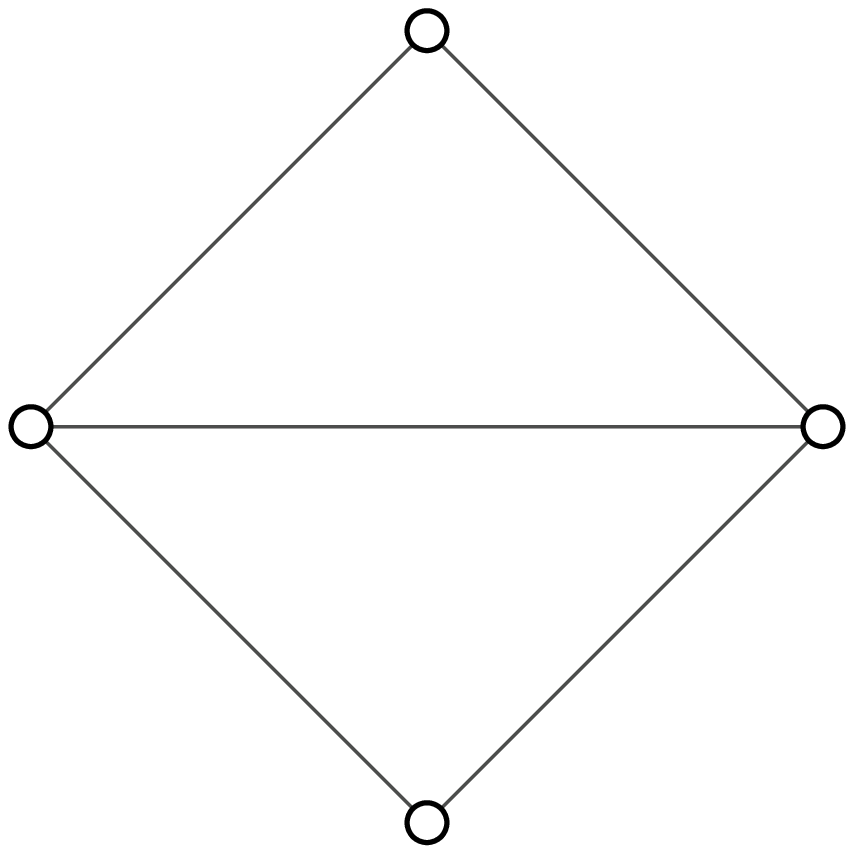} $\qquad$
\includegraphics[scale=0.3]{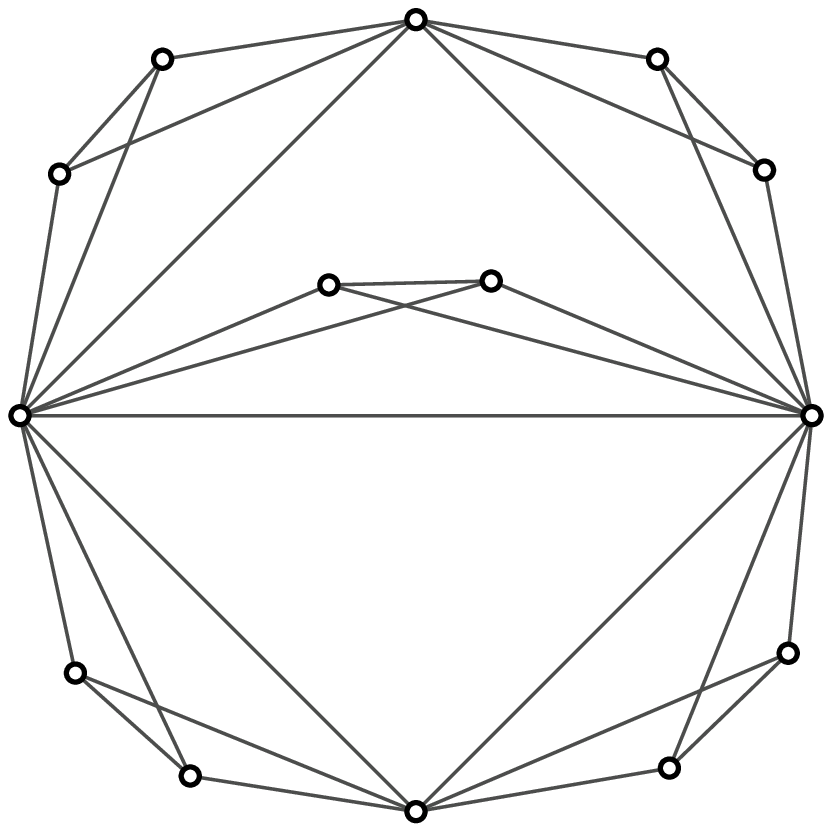}\\
\caption{The left graph is $G$ and the right graph is $G \diamond K_2$} \label{fig1}
}
\end{figure}

It follows from definition of the edge corona product that for two graphs $G$ and $H$ 
with $|V(G)|=n_1$, $|E(G)|=m_1$, $|V(H)|=n_2$ and $|E(H)|=m_2$, the graph 
$G \diamond H$ has $n_1+m_1n_2$ vertices and $m_1(1+m_2+2n_2)$ edges.
This fact allows us to recognize immediately that the associative law never holds,
 that is $G_1 \diamond (G_2 \diamond G_3)\ncong (G_1 \diamond G_2) \diamond G_3$.
  Indeed the first one has $n_1+m_1n_2+m_1m_2n_3+2m_1n_2n_3+m_1n_3$ vertices 
  while the second graph has $n_1+m_1n_2+m_1m_2n_3$ vertices and these numbers 
  are never equal. So, the associative law never holds for generalized edge corona as well.

A \emph{$k-$distance coloring} of a graph $G$ is a vertex coloring of $G$ such that no 
two vertices lying at distance less than or equal to $k$ in $G$ are assigned the same color 
\cite{Fer, Geo, Gon}. The \emph{$k-$distance chromatic number} of $G$ is the minimum
 number of colors necessary to $k$-distance color $G$, and is denoted by $\chi_{\leq k}(G)$.
  We note that proper coloring is a particular case of $k-$distance coloring, where $k = 1$ 
  and the chromatic number in this case denoted by $\chi(G)$.

A set $D$ of vertices in a graph $G$ is a \emph{dominating set} if every vertex in $V(G)- D$ is
adjacent to at least one vertex in $D$.
The \emph{domination number} $\gamma(G)$ is the number of vertices in a smallest 
dominating set for $G$. Dominating sets in graphs are natural models for facility location
 problems in operational research. These problems are concerned with the location of 
 one or more facilities in a way that optimizes a certain objective function such as 
 minimizing transportation cost, providing equitable service to customers and 
 capturing the largest market share \cite{E, Gon, Qua}.

In this paper,  the chromatic number of the generalized edge corona product of
graphs are computed. Furthermore, the upper and lower bounds for $2-$distance
 chromatic number and some results regarding the case $k=3$ have been obtained. 
  As a consequence, these quantities for the edge corona product of some graphs are
   calculated. In addition,  the dominating set, the domination number and the 
   independence number of generalized edge corona product of any connected 
   graph $G$ and arbitrary graphs $H_i$, $1 \leq i \leq |E(G)|$, are obtained.

\section{$k-$Distance Chromatic Number of $G\diamond (H_1, ..., H_m)$}
Let $G$ be a graph with vertex set $V (G)=\{v_1, v_2, ... , v_n\}$ and edge set
$E(G)=\{e_1, e_2, ... , e_m\}$. We start this section by giving the result relative 
to 1-distance chromatic number (chromatic number) of $G \diamond (H_1, ..., H_m)$.

\begin{theorem}\label{Thm:Col}
The chromatic number of $G \diamond (H_1, ..., H_m)$ is given by
$$ \chi(G \diamond (H_1, ..., H_m)) =max_{1\leq i\leq m} \{\chi(G),\  \chi(H_i)+2\}. $$
\end{theorem}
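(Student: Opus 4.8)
The plan is to prove the two inequalities separately, relying on the structural fact that each copy of $H_i$ interacts with the rest of the graph only through the two endpoints of the edge $e_i$. Write $t = \max_{1\le i\le m}\{\chi(G),\ \chi(H_i)+2\}$ for the claimed value, and for each $i$ let $e_i = u_iv_i$ be the $i$-th edge of $G$.

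First I would record the key observation that, by the definition of the generalized edge corona, the subgraph of $G\diamond(H_1,\dots,H_m)$ induced on $\{u_i,v_i\}\cup V(H_i)$ is precisely the join $H_i \vee K_2$: the two vertices $u_i,v_i$ are adjacent (they span $e_i$) and each of them is joined to every vertex of $H_i$. Since the chromatic number of a join satisfies $\chi(A\vee B)=\chi(A)+\chi(B)$, this induced subgraph needs $\chi(H_i)+2$ colours. As $G$ is itself a subgraph, and each such join is a subgraph, the lower bound $\chi(G\diamond(H_1,\dots,H_m))\ge t$ follows from monotonicity of $\chi$ under taking subgraphs.

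For the upper bound I would exhibit a proper colouring using a palette of $t$ colours. First colour the copy of $G$ by a proper colouring with $\chi(G)\le t$ colours; then $u_i$ and $v_i$ receive two distinct colours for every $i$. Next, colour each copy $H_i$ independently: because $t\ge \chi(H_i)+2$, at least $\chi(H_i)$ colours of the palette remain after discarding the two colours of $u_i$ and $v_i$, so $H_i$ can be properly coloured within these remaining colours. The point that makes this assignment globally proper is that every vertex of a copy $H_i$ is adjacent only to vertices of the same copy and to $u_i,v_i$; in particular distinct copies $H_i,H_j$ are non-adjacent and no vertex of $H_i$ is adjacent to a vertex of $G$ other than $u_i,v_i$. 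Hence the colourings of the different copies never interfere and none of them clashes with the colouring of $G$.

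The routine part is verifying propriety edge by edge, which splits into edges of $G$ (handled by the colouring of $G$), edges inside a copy $H_i$ (handled by its proper colouring), and the join edges from $u_i,v_i$ to $H_i$ (handled by our choice to avoid the two endpoint colours). The main obstacle, and the step deserving the most care, is justifying this independence claim about the adjacency structure, that is, confirming directly from the definition that the copies $H_i$ share no vertices and no edges with one another and attach to $G$ only at the endpoints of their corresponding edges. Once that is established the upper bound colouring is forced to be proper, and combining the two bounds yields the stated equality.
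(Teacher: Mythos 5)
Your proposal is correct and follows essentially the same route as the paper: the lower bound comes from the subgraphs $G$ and $\{u_i,v_i\}\cup V(H_i)$ (your join formulation $H_i\vee K_2$ is just a cleaner packaging of the paper's observation that every vertex of $H_i$ is adjacent to both ends of $e_i$), and the upper bound is the same explicit colouring that first properly colours $G$ with at most $t$ colours and then colours each $H_i$ from the palette minus the two endpoint colours. Your version is, if anything, slightly more careful than the paper's in verifying that distinct copies $H_i$ do not interfere.
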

\begin{proof}
Clearly $\chi(G \diamond (H_1, ..., H_m)) \geq \chi(G)$. Also,
since every vertex $u \in V(H_i)$, $1\leq i\leq m$, is adjacent to
 both ends of $e_i=v_iv_{i+1}$, we have $\chi(G\diamond (H_1, ..., H_m))
  \geq max \{\chi(H_i)+2, 1\leq i\leq m\}$ and so, 
  $\chi(G \diamond (H_1, ..., H_m)) \geq max\{\chi(G), \chi(H_i) + 2\}$.

We now consider the reverse inequality, let $t = max_{1\leq i\leq m}
\{\chi(G), \chi(H_i) + 2\}$ and let us color the vertices $v_i$,
$1\leq i\leq n$, of $G$ with $t$ different colors $c_1, c_2, ..., c_t$. If $v_i$
 and $v_{i+1}$ have colors $c_i$ and $c_{i+1}$, respectively, then the 
 graph $H_i$ can be colored by using the set of colors $\{c_1, c_2, ..., c_{i-1}, 
 c_{i+2}, ..., c_t\}$. Hence, we arrived at $$\chi(G\diamond (H_1, ..., H_m)) 
 \leq t = max_{1\leq i\leq m} \{\chi(G),\ \chi(H_i) + 2\}$$ and the proof is completed.
 \end{proof}
 Our next step is to study the particular case $k = 2$. In this case, 
we will give the exact value of $\chi_{\leq 2}(T \diamond (H_1, ..., H_{|E(T)|}))$
 and an upper bound for $\chi_{\leq 2}(K_n \diamond (H_1, ..., H_{|E(K_n)|}))$.
  As a direct consequence,   $\chi_{\leq 2}(T \diamond H)$ and
   $\chi_{\leq 2}(K_{n_1} \diamond H)$ will be determined.
Following Kishore and  Sunitha \cite{Ki}, a tree $T$ on $n$ 
vertices can be considered as composed of $r$ branches or
$r$ star graphs connected together to form a single component 
either by edges or clinging together.

\begin{theorem} \label{T}
Let $T$ be a tree with a vertex $v$ of maximum degree 
$\triangle$, $m$ edges and $n$ vertices. If $H_{i_k}$ 
are corresponding graph to the edge $e_{i_k}=vv_i, 1\leq k\leq \triangle$,
 in $T \diamond (H_1, ..., H_m)$ and $n_{i_k}$ be the order of 
 $H_{i_k}, 1\leq k\leq \triangle$, then
$ \chi_{\leq 2}(T \diamond (H_1, ..., H_m)) =\triangle +1
+\sum_{k=1}^{\triangle} n_{i_k}. $
\end{theorem}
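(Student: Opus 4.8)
The plan is to prove the two inequalities separately: first I would realise the stated value as a lower bound coming from a single clique in the square graph, and then as an upper bound via an explicit colouring that splits the palette into two disjoint parts. For the lower bound I would work in the square $G^2$ of $G=T\diamond(H_1,\dots,H_m)$, in which two vertices are adjacent exactly when their distance in $G$ is at most $2$. Since $v$ is an end-vertex of every edge $e_{i_k}$, it is adjacent in $G$ to its $\triangle$ tree-neighbours and to all $\sum_{k=1}^{\triangle} n_{i_k}$ vertices of the graphs $H_{i_k}$; hence any two neighbours of $v$ lie at distance at most $2$ through $v$. Thus the closed neighbourhood $N[v]$ is a clique of $G^2$, so a $2$-distance colouring must assign its $|N[v]|=1+\triangle+\sum_{k=1}^{\triangle} n_{i_k}$ vertices pairwise distinct colours, giving $\chi_{\le 2}(G)\ge \triangle+1+\sum_{k=1}^{\triangle} n_{i_k}$ independently of the rest of the construction.

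For the upper bound I would first record that the edge corona introduces no shortcuts between tree vertices: a vertex of a copy $H_e$ is adjacent only to the two end-vertices of $e$, so a common neighbour of two distinct tree vertices forces them to be the ends of a single edge. Consequently the distance in $G$ between two tree vertices equals their distance in $T$, and colouring the tree part amounts to colouring $T^2$. As the square of a tree of maximum degree $\triangle$ satisfies $\chi(T^2)=\triangle+1$ (a greedy BFS colouring forbids at most $\triangle$ colours at each step), I would colour all of $V(T)$ from a palette $A$ with $|A|=\triangle+1$, and then colour the copies $H_e$ from a second, disjoint palette $B$ with $|B|=\sum_{k=1}^{\triangle} n_{i_k}$; disjointness settles every tree-versus-$H$ constraint simultaneously.

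The heart of the argument is the assignment of colours from $B$ to the copies $H_e$. Two $H$-vertices lie at distance at most $2$ in $G$ precisely when they belong to the same copy or to copies sitting on two edges that share a tree vertex, whereas copies on disjoint edges impose no constraint. I therefore need to give each edge $e$ a set of $n_e$ distinct colours from $B$ so that edges meeting at a common vertex receive disjoint sets. I would model this as a proper edge-colouring of the multigraph $T'$ obtained from $T$ by replacing each edge $e$ with $n_e$ parallel edges: the parallel copies of $e$ share both ends, and edges on adjacent original edges share a vertex, so a proper edge-colouring of $T'$ yields exactly the required disjoint colour-sets. Since $T$, and hence $T'$, is bipartite, K\"onig's edge-colouring theorem gives $\chi'(T')=\triangle(T')=\max_{w}\sum_{e\ni w} n_e$.

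The main obstacle is thus to verify that this maximum degree of $T'$ is attained at $v$, i.e. that $\sum_{e\ni v} n_e=\sum_{k=1}^{\triangle} n_{i_k}$ dominates $\sum_{e\ni w} n_e$ for every tree vertex $w$ (this holds automatically in the edge corona case $H_1\cong\cdots\cong H_m$, where each such sum is $\deg_T(w)$ times a fixed order and is therefore maximised at the vertex of maximum degree). Granting this, K\"onig supplies a proper edge-colouring of $T'$ with exactly $|B|=\sum_{k=1}^{\triangle} n_{i_k}$ colours, which I translate back into compatible colourings of all copies $H_e$. The two disjoint palettes together use $\triangle+1+\sum_{k=1}^{\triangle} n_{i_k}$ colours, and a final check over the possible pairs of vertices within distance $2$ (tree--tree, two vertices of one copy, vertices of adjacent copies, and tree--$H$) confirms that the colouring is $2$-distance proper. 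Matching this upper bound with the lower bound established above completes the proof.
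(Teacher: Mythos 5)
Your lower bound is exactly right and coincides with the paper's: the closed neighbourhood $N[v]$ in $G=T\diamond(H_1,\dots,H_m)$ is a clique in the square of $G$, of size $1+\triangle+\sum_{k=1}^{\triangle} n_{i_k}$, so that many colours are forced. Your upper-bound route, however, is genuinely different from the paper's. The paper colours greedily, branch by branch, starting at $v$; you instead reduce the colouring of the copies $H_e$ to a proper edge-colouring of the bipartite multigraph $T'$ obtained by replacing each edge $e$ of $T$ by $n_e$ parallel edges, and invoke K\"onig's theorem $\chi'(T')=\triangle(T')$. That reduction is sound: vertices inside one copy are pairwise within distance $2$, copies on edges sharing a tree vertex need disjoint colour sets, copies on disjoint edges are at distance at least $3$, and the disjoint-palette device disposes of all tree-versus-copy constraints. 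This is a more disciplined argument than the one in the paper.

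The step you explicitly leave open --- that $\triangle(T')=\max_w\sum_{e\ni w}n_e$ is attained at $v$ --- is a genuine gap, and it cannot be closed, because the theorem as stated is false without that hypothesis. Take $T$ with vertices $v,u_1,u_2,u_3,w$ and edges $vu_1,vu_2,vu_3,u_1w$, so that $v$ is the unique vertex of maximum degree $\triangle=3$; put $H_i=K_1$ on the three edges at $v$ and a graph of order $6$ on the edge $u_1w$. The theorem then claims $\chi_{\leq 2}=3+1+3=7$, yet the three vertices $v,u_1,w$ together with the six vertices of the copy on $u_1w$ are pairwise within distance $2$, forcing $\chi_{\leq 2}\geq 9$. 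What your argument actually proves is the corrected statement in which one additionally assumes that $\sum_{e\ni w}n_e$ is maximized at $v$ --- an assumption that holds automatically in the uniform case $H_1\cong\cdots\cong H_m$, i.e.\ Corollary \ref{Tr}. Note that the paper's own proof commits exactly the error you refused to commit, in the sentence ``since other vertices of $T$ have degrees less than or equal to $\triangle$, there is no more colors needed'': degree in $T$ does not control the weighted sum $\sum_{e\ni w}n_e$. So you have isolated precisely the point where the paper's argument (and the theorem itself) breaks; as a proof of the literal statement your attempt is necessarily incomplete, but the defect lies in the statement, not in your method.
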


\begin{proof}
Consider $T$ as a subgraph of $T \diamond (H_1, ..., H_m)$. 
We can easily observe that $2-$distance chromatic number 
of $T$ is $\triangle+1$. Let there be $r$ connected star graphs for $T$.
 We start by the vertex $v$ of the maximum degree $\triangle$ in $T$.
Obviously, all the vertices adjacent to $v$ should be colored different from $v$ 
and also from each others. Hence we need $\triangle +1$ colors. Since other 
vertices have degrees less than or equal to $\triangle$, there is no more colors
 needed. We are repeating this process till all the vertices in the $r$ branches of
  $T$ are colored. Next we consider all graphs $H_{i_k},\ 1\leq k\leq \triangle$. 
  Since all of these graphs have the same neighbor $v$, they have to be colored 
  differently. Also it is clear that no pairs of vertices of each $H_{i_k}$ could be
 colored the same. Moreover, by definition of generalized edge corona and $2-$distance 
 coloring, the vertices of each $H_{i_k}$ should be colored differently from all 
 the $\triangle+1$ colors used before. Similarly, since other vertices of $T$ have 
 degrees less than or equal to $\triangle$, there is no more colors needed to color 
 all the vertices of $H_i$'s in the $r$ branches of $T$.
 So in general, we arrived at $\chi_{\leq 2}(T \diamond (H_1, ..., H_m)) 
 =\triangle+1+\sum_{k=1}^{\triangle} n_{i_k}$ and the theorem is proved.
\end{proof}

\begin{cor}
\label{Tr}
Let $T$ be a tree with maximum degree $\triangle$ and $H$ be a graph of order $n$. 
Then $$ \chi_{\leq 2}(T \diamond H) =(n+1)\triangle+1. $$
\end{cor}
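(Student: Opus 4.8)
The plan is to derive this directly from Theorem \ref{T} by specializing the generalized edge corona to the ordinary edge corona, in which every factor graph coincides with a single graph $H$. Recall that $T \diamond H$ is precisely $T \diamond (H_1, \dots, H_m)$ with $H_i \cong H$ for all $1 \leq i \leq m$, so the hypotheses of Theorem \ref{T} are met verbatim and only the data entering its formula need to be computed.

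First I would fix a vertex $v$ of maximum degree $\triangle$ in $T$, so that exactly $\triangle$ edges $e_{i_1}, \dots, e_{i_\triangle}$ are incident to $v$. By the defining property of the edge corona, each attached graph $H_{i_k}$ is a copy of $H$, and hence its order satisfies $n_{i_k} = n = |V(H)|$ for every $1 \leq k \leq \triangle$. This collapses the sum appearing in Theorem \ref{T} into a sum of $\triangle$ equal terms.

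I would then substitute these values into the conclusion of Theorem \ref{T}:
$$\chi_{\leq 2}(T \diamond H) = \triangle + 1 + \sum_{k=1}^{\triangle} n_{i_k} = \triangle + 1 + \triangle n,$$
and finally factor the right-hand side as $(n+1)\triangle + 1$, which is exactly the claimed expression.

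There is no genuine obstacle here, as the statement is an immediate corollary; the only point requiring any care is the observation that in the edge corona all the relevant orders $n_{i_k}$ degenerate to the single value $n$, so that $\sum_{k=1}^{\triangle} n_{i_k} = \triangle n$. No fresh coloring argument is needed, since both the explicit $2$-distance coloring and its optimality were already established in the proof of Theorem \ref{T}.
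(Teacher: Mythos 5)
Your proposal is correct and matches the paper's treatment: the paper states Corollary \ref{Tr} as an immediate consequence of Theorem \ref{T}, obtained exactly as you do by taking every $H_{i_k} \cong H$ so that $n_{i_k} = n$ and the sum collapses to $\triangle n$, giving $\triangle + 1 + \triangle n = (n+1)\triangle + 1$. Your explicit handling of the notational clash (the corollary's $n$ is $|V(H)|$, not $|V(T)|$ as in Theorem \ref{T}) is a welcome bit of care that the paper omits.
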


A set of edges in a graph $G$ is called independent or a matching if
no two edges have a vertex in common.  The size of  any largest matching
in $G$  is called the matching number of $G$ and is denoted by $\nu(G)$. 

Before proving Theorem \ref{kn}, we record the following simple lemma:

\begin{lemma}
\label{In}
The number of independent edges in $K_n$ is
$$\nu (K_n)=\left\{{{n\over 2} \hskip 1.5cm 2\mid n}\atop {{n-1 \over 2} \hskip 1cm 2\nmid n }\right.$$
\end{lemma}

\begin{theorem}
\label{kn}
Let $K_{n}$ be a complete graph of order $n$ and $H_i$ be graphs 
with $n_i$ vertices, $1\leq i \leq |E(K_n)|$. Then,
$$ \chi_{\leq 2}(K_{n} \diamond (H_1, ..., H_{|E(K_n)|}) )
\leq \left \{{n(t+1)\atop n(t+1)-t}  \hskip 1cm {2\nmid n\atop { 2\mid n }}\right.  $$
where $t=max_{1\leq i \leq |E(K_n)|} \{n_i\}$.
\end{theorem}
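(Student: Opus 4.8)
The plan is to first analyze the distance structure of $K_n \diamond (H_1, \dots, H_{|E(K_n)|})$ and then convert the $2$-distance coloring into a problem of allocating color palettes to the edges of $K_n$. Write $V(K_n) = \{v_1, \dots, v_n\}$. Since $K_n$ has diameter $1$, the $n$ base vertices are pairwise at distance $1$ and hence must all receive distinct colors; this forces at least $n$ colors and is achievable with exactly $n$, say $1, \dots, n$. Next, for any $u \in V(H_i)$ with $e_i = v_p v_q$, the vertex $u$ is adjacent to $v_p, v_q$ and, for any other base vertex $v_a$, the path $u\,v_p\,v_a$ shows $d(v_a, u) = 2$. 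Thus every vertex lying in some $H_i$ is at distance $\le 2$ from all of $v_1, \dots, v_n$, so its color must avoid $\{1, \dots, n\}$; we therefore color the $H_i$-copies using a fresh palette.

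The crucial observation concerns two copies $H_i, H_j$ with $i \ne j$: if $e_i$ and $e_j$ share an endpoint, then their vertices have a common neighbor in $K_n$ and lie at distance $2$, whereas if $e_i$ and $e_j$ are disjoint the shortest $u$--$w$ path has length $3$ (the endpoints give no common neighbor, and distinct copies share no $H$-neighbors). Consequently the vertices of $H_i$ and $H_j$ may safely reuse the same colors exactly when $e_i$ and $e_j$ form a matching. Within a single $H_i$ all vertices are mutually at distance $\le 2$, so $H_i$ needs $n_i \le t$ distinct fresh colors.

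This reduces the task to partitioning $E(K_n)$ into matchings and allocating one fresh palette of $t$ colors per matching. I would decompose $E(K_n)$ into maximum matchings: a $1$-factorization when $n$ is even and a near-$1$-factorization when $n$ is odd, both standard. Each matching $M_\ell$ receives a palette $P_\ell$ of $t$ new colors, the palettes being pairwise disjoint and disjoint from $\{1, \dots, n\}$; for each $e_i \in M_\ell$ color the $n_i$ vertices of $H_i$ with distinct colors from $P_\ell$. Because edges inside $M_\ell$ are pairwise disjoint (distance $3$), the reuse is legitimate, and because palettes are disjoint across classes every distance-$2$ pair again gets distinct colors. The number of color classes equals $|E(K_n)|/\nu(K_n) = \binom{n}{2}/\nu(K_n)$, which by Lemma \ref{In} is $n-1$ for even $n$ and $n$ for odd $n$. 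Hence the total number of colors used is $n + (n-1)t = n(t+1) - t$ when $2 \mid n$ and $n + nt = n(t+1)$ when $2 \nmid n$, giving the stated bounds.

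The main obstacle is recognizing that disjoint edges permit palette reuse, thereby linking the bound to the partition of $E(K_n)$ into matchings counted via Lemma \ref{In}; verifying the existence of the $1$-factorization and near-$1$-factorization and checking that every distance-$\le 2$ pair is properly separated are the remaining routine points.
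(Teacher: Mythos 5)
Your proposal is correct and follows essentially the same route as the paper's proof: color the $n$ mutually adjacent base vertices with $n$ colors, observe that each $H_i$ needs a fresh palette of at most $t$ colors, and reuse palettes across copies whose corresponding edges form a matching, with the matching decomposition (via Lemma \ref{In}) giving $n-1$ classes for even $n$ and $n$ classes for odd $n$. Your write-up is in fact more careful than the paper's, since you explicitly verify the distance-$3$ separation between copies on disjoint edges and cite the $1$-factorization and near-$1$-factorization of $K_n$, both of which the paper leaves implicit.
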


\begin{proof}
It is clear that we need $n$ colors for coloring the vertices of $K_{n}$.
 Since all pairs $u$, $v$ of vertices of $H_i, \ 1\leq i \leq |E(K_n)|,$ can
  be connected with at least a path of length two, using a vertex of $K_{n}$, 
  every pair of distinct vertices of $H_i$ should be colored differently. 
  Let $t=max_{1\leq i \leq |E(K_n)|}\{n_i\}$. By Lemma  \ref{In},
   if $n$ is even, then the set of nonempty graphs $H_i$ can be divided into
    $n-1$ classes of size $n/2$ which their vertices could be colored using the
     same set of colors, and  if $n$ is odd, then we have $n$ classes of size
      ${n-1\over 2}$ of $H_i$ which could be assigned the same colors. 
      So, in general, if $n$ is even we need at most $t(n-1)$ new colors
       and if $n$ is odd, we need at most $tn$ new colors and the result follows.
\end{proof}

 Applying Theorem \ref{kn}, we obtain the following corollary.

\begin{cor}
\label{k_n}
Let $K_{n_1}$ be a complete graph of order $n_1$ and $H$ be
 a graph with $n_2$ vertices. Then we have
$$ \chi_{\leq 2}(K_{n_1} \diamond H) =\left \{{n_1(n_2+1)\qquad 
\atop n_1(n_2+1)-n_2}  \hskip 1cm {2\nmid n_1\atop { 2\mid n_1 }}\right.  $$
\end{cor}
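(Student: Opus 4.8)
The plan is to obtain the upper bound directly from Theorem \ref{kn} and then to establish a matching lower bound by a counting argument. Since $K_{n_1} \diamond H$ is the special case of $K_{n_1} \diamond (H_1, \ldots, H_{|E(K_{n_1})|})$ in which every $H_i$ is a copy of $H$, we have $t = \max_i \{n_i\} = n_2$. Substituting $t = n_2$ and $n = n_1$ into Theorem \ref{kn} gives $\chi_{\leq 2}(K_{n_1} \diamond H) \leq n_1(n_2 + 1)$ when $n_1$ is odd and $\chi_{\leq 2}(K_{n_1} \diamond H) \leq n_1(n_2 + 1) - n_2$ when $n_1$ is even. It therefore remains to prove the reverse inequalities.

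First I would record the structural facts that drive the lower bound. The $n_1$ vertices of the base $K_{n_1}$ are pairwise adjacent, so they require $n_1$ distinct colors. Moreover, any vertex $u$ lying in the copy of $H$ attached to an edge $e = v_a v_b$ is adjacent to $v_a$ and $v_b$, and since $K_{n_1}$ is complete, $u$ is at distance at most $2$ from every remaining base vertex as well; hence each vertex of each copy of $H$ must receive a color distinct from all $n_1$ base colors. Consequently the palette used on the copies of $H$ is disjoint from the $n_1$ base colors, and the two contributions simply add.

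The heart of the argument is to count the colors needed for the copies of $H$. Within a single copy all $n_2$ vertices are pairwise within distance $2$ (through a common base endpoint), so they receive $n_2$ distinct colors. If two copies are attached to edges sharing a base vertex, then every vertex of one is at distance $2$ from every vertex of the other, forcing their color sets to be disjoint; only copies attached to a matching of $K_{n_1}$ may reuse a common palette. Thus a fixed color can appear on at most one vertex per copy and only on copies whose edges form a matching, that is, on at most $\nu(K_{n_1})$ vertices in total. The total number of copy-vertices is $\binom{n_1}{2} n_2$, so the number of colors devoted to the copies is at least $\binom{n_1}{2} n_2 / \nu(K_{n_1})$. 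Invoking Lemma \ref{In} and simplifying yields $n_1 n_2$ when $n_1$ is odd and $(n_1 - 1) n_2$ when $n_1$ is even. Adding the $n_1$ base colors produces exactly $n_1(n_2 + 1)$ and $n_1(n_2 + 1) - n_2$ respectively, matching the upper bounds and completing the proof.

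I expect the main obstacle to be the counting step: one must justify carefully that a single color class, restricted to the copies of $H$, meets each copy at most once and is supported on a set of edges forming a matching, so that Lemma \ref{In} can be applied to cap its size at $\nu(K_{n_1})$. Everything else is bookkeeping that follows from the distance observations above.
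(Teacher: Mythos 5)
Your proposal is correct, and it is in fact more complete than the paper's own treatment. The paper disposes of this corollary with the single sentence ``Applying Theorem \ref{kn}, we obtain the following corollary,'' but Theorem \ref{kn} is stated and proved only as an upper bound, so strictly speaking the paper never establishes the lower bound required for the claimed equality. Your upper bound is the same as the paper's (specialize $t=n_2$, $n=n_1$); what you add is the missing half. Your counting argument checks out: every copy-vertex is within distance $2$ of all $n_1$ base vertices (so copy colors and base colors are disjoint); a fixed color meets each copy at most once (two vertices of one copy share a base neighbor) and can only appear on copies attached to pairwise disjoint edges (copies on edges sharing an endpoint are pairwise within distance $2$ through that endpoint), hence on at most $\nu(K_{n_1})$ copy-vertices in total; and the averaging step $\binom{n_1}{2}n_2/\nu(K_{n_1})$ combined with Lemma \ref{In} does simplify to $n_1 n_2$ for odd $n_1$ and $(n_1-1)n_2$ for even $n_1$, which added to the $n_1$ base colors matches the upper bound exactly. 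The one caveat, which afflicts the corollary as stated and not just your argument, is the degenerate case $n_1=1$: there $\nu(K_1)=0$, so your division is undefined, and the stated formula gives $n_2+1$ even though $K_1\diamond H=K_1$ has $2$-distance chromatic number $1$; the statement should be read with $n_1\geq 2$. Modulo that reading, your proof is a genuine improvement on the paper's one-line derivation.
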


\begin{theorem}
\label{uplow}
Let $G$ be a graph with a vertex $v$ of maximum degree $\triangle$ 
and $H_{i_k}$ be corresponding graph to the edge $e_{i_k}=vv_i, 1
\leq k\leq \triangle$, in $G \diamond (H_1, ..., H_m)$ and $n_{i_k}$
 be order of $H_{i_k}, 1\leq k\leq \triangle$. Then,
$$\triangle+1+\sum_{k=1}^{\triangle} n_{i_k} \leq \chi_{\leq 2}
(G \diamond  (H_1, ..., H_{|E(G)|}) \leq n(t+1), $$
where $|V(G)|=n$ and $t=max_{1\leq i \leq |E(G)|} \{|V(H_i)|\}$.
\end{theorem}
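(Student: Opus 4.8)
The plan is to establish the two inequalities separately. The lower bound is a short ``clique in the distance-square'' argument, while the upper bound is the substantive part, which I would obtain by reducing to the complete-graph case already settled in Theorem \ref{kn}.

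For the lower bound I would exhibit a set of vertices that must receive pairwise distinct colors in any $2$-distance coloring. Let $v$ be the fixed vertex of degree $\triangle$, let $w_1,\dots,w_\triangle$ denote its neighbours (the far endpoints of $e_{i_1},\dots,e_{i_\triangle}$), and set
$$ S=\{v\}\cup\{w_1,\dots,w_\triangle\}\cup\bigcup_{k=1}^{\triangle}V(H_{i_k}). $$
Every vertex of each $H_{i_k}$ is adjacent to both $v$ and $w_k$ by the definition of the generalized edge corona. Hence any two elements of $S$ are joined by a path of length at most $2$ through $v$: two neighbours $w_j,w_k$ via $v$; a vertex of some $H_{i_k}$ and any $w_j$, or a vertex of another $H_{i_j}$, again via $v$; and $v$ itself is at distance $1$ from all of them. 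Thus $S$ induces a clique in the distance-square graph, all of its $|S|=1+\triangle+\sum_{k=1}^{\triangle}n_{i_k}$ vertices need different colors, and the lower bound follows.

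For the upper bound I would argue that $G\diamond(H_1,\dots,H_m)$ is a spanning subgraph of $K_n\diamond(\widetilde H_1,\dots,\widetilde H_{|E(K_n)|})$, where $\widetilde H_i=H_i$ on the edges of $G$ and $\widetilde H_i$ is the null graph on the remaining edges of $K_n$: the two graphs share the same vertex set and every edge of the former is an edge of the latter, so distances in $G\diamond(\cdots)$ are at least as large, the distance-$\le 2$ graph is a subgraph, and therefore $\chi_{\leq 2}(G\diamond(\cdots))\le \chi_{\leq 2}(K_n\diamond(\cdots))$. Since $\max_i|V(\widetilde H_i)|=t$, Theorem \ref{kn} bounds the right-hand side by $n(t+1)$ in both parities, which is exactly the claimed upper bound. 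If one prefers to avoid null graphs, the same bound can be built directly: color $V(G)$ with $n$ distinct colors, properly edge-color $G$ into $\chi'(G)\le\triangle+1\le n$ matchings (Vizing), and to each matching-class assign a fresh block of $t$ colors disjoint from the $n$ colors on $V(G)$ and from the other blocks, coloring the (at most $t$) vertices of each $H_i$ in that class injectively from its block.

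The main obstacle is verifying that this reuse of color blocks is legitimate, i.e.\ that two copies $H_i,H_j$ sitting on \emph{vertex-disjoint} edges $e_i,e_j$ may safely share colors. The key observation is that the only neighbours of a vertex $u\in H_i$ are the two endpoints of $e_i$ together with its neighbours inside $H_i$; when $e_i\cap e_j=\varnothing$ and the copies $H_i,H_j$ are vertex-disjoint, a vertex of $H_i$ and a vertex of $H_j$ have no common neighbour, so they lie at distance at least $3$ and impose no constraint. This is precisely the phenomenon underlying the matching count (and the role of Lemma \ref{In} in the $K_n$ proof): edges in one matching-class are mutually vertex-disjoint, so their attached graphs may reuse a single block. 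Once this is in place, together with the fact that the $H$-colors are taken disjoint from the $G$-colors, the tally $n+\chi'(G)\cdot t\le n+n\cdot t=n(t+1)$ closes the argument.
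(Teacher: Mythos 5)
Your proposal is correct and follows essentially the same route as the paper: the lower bound comes from the fact that the closed neighborhood of the maximum-degree vertex $v$ (of size $\triangle+1+\sum_{k=1}^{\triangle} n_{i_k}$) must be rainbow-colored, which the paper phrases as $\triangle(G \diamond (H_1, ..., H_m))+1\leq \chi_{\leq 2}(G \diamond (H_1, ..., H_m))$, and the upper bound is a reduction to the complete-graph case of Theorem \ref{kn}. The only difference is one of rigor, in your favor: your spanning-subgraph monotonicity argument (and the alternative Vizing-style block construction) makes precise the paper's unjustified assertion that ``in the worst case $G$ is a complete graph,'' so nothing is missing from your attempt.
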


\begin{proof}
By taking into account that the maximum degree of $G\diamond  
(H_1, ..., H_{|E(G)|})$ is $\triangle+\sum_{k=1}^{\triangle} n_{i_k}$
 and the fact that
$\triangle(G \diamond  (H_1, ..., H_{|E(G)|}) )+1\leq \chi_{\leq 2}(G 
\diamond  (H_1, ..., H_{|E(G)|}) )$, we obtain the lower bound.

On the other hand, suppose $H_i$ be a graph corresponding to 
the edge $e_i$ of $G$. Clearly, every pair of vertices of $H_i$ 
should be colored differently. In the worst case, if $G$ is a 
complete graph, then there is a path of length two between 
every vertex of $H_i$ and every vertex of $G$. Therefore, 
the upper bound is obtained by applying the Theorem \ref{kn}.
\end{proof}

By considering Theorems \ref{T} and \ref{kn}, 
we can conclude that the above inequalities, are sharp.

\begin{cor}
Let $G$ be a graph with maximum degree $\triangle$. Then,
$$(n_2+1)\triangle+1 \leq \chi_{\leq 2}(G \diamond H) \leq  n_1(n_2+1)$$
where $n_1$ and $n_2$ are the number of vertices of $G$ and $H$, respectively.
\end{cor}

By considering Corollaries \ref{Tr} and \ref{k_n}, we can conclude 
that the above inequalities, are sharp. We now present some results
 on $3-$distance chromatic number of some families of graphs.

\begin{lemma}
\label{D}
$D(G \diamond  (H_1, ..., H_{|E(G)|}))\leq D(G)+2.$
\end{lemma}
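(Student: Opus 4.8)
The plan is to bound the distance between an arbitrary ordered pair of vertices of $G \diamond (H_1, ..., H_{|E(G)|})$ by a case analysis on where each vertex sits, namely whether it lies in $V(G)$ or in one of the copies $V(H_i)$. Write $d$ for distance in the product and recall that $G$ is a spanning subgraph of the product, so the single observation I will lean on throughout is that for any two vertices $a,b \in V(G)$ one has $d(a,b) \leq d_G(a,b) \leq D(G)$ (shortcuts through the $H_i$ can only help, and since I want an upper bound this one-sided estimate suffices). Implicit in the statement is that $G$, and hence the product, is connected so that the diameter is finite; I would flag this hypothesis at the outset and carry out the analysis under it.

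First, if both chosen vertices lie in $V(G)$, the observation gives a path of length at most $D(G)$, which is already below the target. Next, if $x \in V(G)$ and $y \in V(H_i)$, I would route through an endpoint of $e_i$: since, by definition of the generalized edge corona, $y$ is adjacent to both ends of $e_i$, choosing one endpoint $a$ yields $d(x,y) \leq d(x,a) + 1 \leq D(G) + 1$. The decisive cases are those in which both vertices lie inside copies. If $x$ and $y$ both lie in the same $H_i$, then both are adjacent to a common endpoint $a$ of $e_i$, so the two-step path $x \to a \to y$ gives $d(x,y) \leq 2$. If $x \in V(H_i)$ and $y \in V(H_j)$ with $i \neq j$, I would pick an endpoint $a$ of $e_i$ and an endpoint $b$ of $e_j$ and use the detour $x \to a \to \cdots \to b \to y$, obtaining $d(x,y) \leq 1 + d(a,b) + 1 \leq D(G) + 2$.

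This last estimate is exactly what saturates the bound and explains the additive constant: it costs one edge to descend from a copy $H_i$ into $G$ and one more edge to climb back into another copy $H_j$, with a shortest $G$-path joining the chosen endpoints in between. I do not expect a genuine obstacle here, as every pair admits such a detour of the advertised length; the only points requiring care are to confirm that each case is covered and that the endpoints used are always incident to the edge whose copy we are leaving or entering, so that the first and last single edges of each detour are legitimate. Taking the maximum over all cases gives $D(G \diamond (H_1, ..., H_{|E(G)|})) \leq D(G) + 2$, completing the argument.
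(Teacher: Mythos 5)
Your proof is correct and complete. The paper actually states this lemma without any proof at all, so there is nothing to compare against; your four-way case analysis (both vertices in $V(G)$; one in $V(G)$ and one in a copy $H_i$; both in the same copy; both in distinct copies $H_i$, $H_j$) is exactly the routine argument the authors evidently considered too obvious to record, and you handle the only delicate points correctly: you use $G$ as a subgraph of the product only for one-sided upper bounds on distances, and you note the implicit connectivity hypothesis needed for the diameters to be finite.
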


\begin{theorem}
\label{Col2}
The $3-$distance chromatic number of $K_n\diamond (H_1, ..., H_{|E(K_n)|})$ is
 $$ \chi_{\leq 3}(K_n \diamond (H_1, ..., H_{|E({K_n})|})) =
 n+\sum_{i=1}^{{n(n-1)} \over 2} n_i.  $$
\end{theorem}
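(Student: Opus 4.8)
The plan is to reduce the theorem to a statement about the diameter of the product graph. The crucial observation is that, by Lemma \ref{D}, since $D(K_n)=1$ we have $D(K_n \diamond (H_1,\dots,H_{|E(K_n)|})) \le D(K_n)+2 = 3$. Hence every pair of distinct vertices of the product lies at distance at most $3$ from one another. In any $3$-distance coloring, two vertices at distance at most $3$ must receive distinct colors; therefore \emph{all} vertices must be colored differently, and $\chi_{\le 3}$ equals the total number of vertices of the graph. This turns a coloring problem into a vertex count.

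Next I would count the vertices. The product $K_n \diamond (H_1,\dots,H_{|E(K_n)|})$ consists of the $n$ vertices of $K_n$ together with the vertices of each $H_i$; since $|E(K_n)| = \binom{n}{2} = \tfrac{n(n-1)}{2}$, the total is $n + \sum_{i=1}^{n(n-1)/2} n_i$. This immediately yields the lower bound $\chi_{\le 3} \ge n + \sum_{i=1}^{n(n-1)/2} n_i$, because all vertices need pairwise distinct colors. The matching upper bound is trivial: assigning a distinct color to every vertex is always a valid $3$-distance (indeed, any-distance) coloring, so $\chi_{\le 3} \le n + \sum_{i=1}^{n(n-1)/2} n_i$. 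Combining the two bounds gives the claimed equality.

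The genuine content sits entirely in the diameter estimate, so if I did not wish to lean on Lemma \ref{D} I would prove $D \le 3$ directly by checking the three types of vertex pairs: two vertices of $K_n$ are adjacent; a vertex of $K_n$ and a vertex $u\in V(H_i)$ are at distance at most $2$ via an endpoint of $e_i$; and two vertices $u\in V(H_i)$ and $w\in V(H_j)$ in distinct factors are joined by a path $u - v_a - v_b - w$ of length at most $3$, using that any two endpoints $v_a,v_b$ in $K_n$ are adjacent or coincide. The main obstacle is thus purely this distance bookkeeping — and it is precisely what makes completeness of $K_n$ essential, since only there is every pair of edge-endpoints within distance $1$. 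Once the diameter is pinned at $\le 3$, the coloring argument is automatic and no explicit coloring construction is needed.
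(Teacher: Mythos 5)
Your proposal is correct and follows exactly the paper's route: invoke Lemma \ref{D} to get $D(K_n \diamond (H_1,\dots,H_{|E(K_n)|}))\leq 3$, conclude that every pair of vertices must receive distinct colors, and count the vertices as $n+\sum_{i=1}^{n(n-1)/2} n_i$. You merely spell out the vertex count and the case-by-case distance check that the paper leaves implicit, which is a welcome but not substantively different elaboration.
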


\begin{proof}
By Lemma \ref{D}, the diameter of $K_n \diamond (H_1, ..., H_{|E({K_n})|})$
 is at most three. So all vertices of $K_n \diamond (H_1, ..., H_{|E({K_n})|})$ 
 have to be assigned different colors.
\end{proof}

The next corollary now follows directly from Theorem \ref{Col2}.

\begin{cor}
The $3-$distance chromatic number of $K_n\diamond H$ is
 $$ \chi_{\leq 3}(K_n \diamond H) =n+{{n(n-1)} \over 2} |V(H)|.  $$
\end{cor}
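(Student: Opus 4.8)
The plan is to obtain this corollary as the straightforward specialization of Theorem~\ref{Col2} to the case of identical factors. Recall from the introduction that when $H_1, \ldots, H_m$ are all isomorphic to a single graph $H$, the generalized edge corona $G \diamond (H_1, \ldots, H_m)$ reduces exactly to the edge corona product $G \diamond H$. Thus $K_n \diamond H$ is nothing but $K_n \diamond (H_1, \ldots, H_{|E(K_n)|})$ with every $H_i$ replaced by $H$, and the entire content of the corollary can be extracted from the theorem already proved.

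First I would record that the complete graph $K_n$ has $|E(K_n)| = \binom{n}{2} = \tfrac{n(n-1)}{2}$ edges, so the edge corona attaches one copy of $H$ to each of these $\tfrac{n(n-1)}{2}$ edges. Next, since each constituent graph is $H$, its order $n_i = |V(H)|$ is a single constant independent of $i$. Consequently the summation appearing in Theorem~\ref{Col2} collapses to $\sum_{i=1}^{n(n-1)/2} n_i = \tfrac{n(n-1)}{2}\,|V(H)|$.

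Substituting this value into the formula $\chi_{\leq 3}(K_n \diamond (H_1, \ldots, H_{|E(K_n)|})) = n + \sum_{i=1}^{n(n-1)/2} n_i$ of Theorem~\ref{Col2} then gives $\chi_{\leq 3}(K_n \diamond H) = n + \tfrac{n(n-1)}{2}\,|V(H)|$, which is precisely the asserted identity.

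There is essentially no obstacle to this argument: the whole structural work---establishing via Lemma~\ref{D} that the diameter of $K_n \diamond (\cdots)$ is at most three and hence that every pair of vertices must receive distinct colors---has already been carried out in the proof of Theorem~\ref{Col2}. The only point requiring care is the book-keeping, namely confirming that $|E(K_n)| = \tfrac{n(n-1)}{2}$ and that the constant summand simplifies the sum as claimed; both are entirely routine.
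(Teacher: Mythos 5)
Your proposal is correct and takes essentially the same route as the paper, which likewise obtains the corollary as a direct specialization of Theorem~\ref{Col2} to the case where every $H_i$ is the same graph $H$. The only difference is that the paper asserts this in a single line, while you explicitly carry out the bookkeeping that $|E(K_n)|=\tfrac{n(n-1)}{2}$ and that the sum collapses to $\tfrac{n(n-1)}{2}\,|V(H)|$.
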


\begin{theorem}
\label{Thm:Col2}
$ \chi_{\leq 3}(K_{m,n} \diamond  (H_1, ..., H_{mn})) =m+n+ \sum_{i=1}^{mn} n_i,$
where $n_i$ is the number of vertices of $H_i$. In particular,  
$ \chi_{\leq 3}(K_{m,n} \diamond H) =m+n+ mn n_2,  $
where $n_2$ is the number of vertices of $H$.
\end{theorem}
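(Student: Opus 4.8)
The plan is to show that the graph $K_{m,n} \diamond (H_1, \ldots, H_{mn})$ has diameter at most $3$; once this is established, every two vertices lie at distance $\leq 3$, so any valid $3$-distance coloring must assign them pairwise distinct colors. Since the total number of vertices is exactly $m + n + \sum_{i=1}^{mn} n_i$, this many colors are both necessary and (trivially) sufficient, which gives the claimed equality. The \emph{in particular} clause then follows at once by setting every $H_i \cong H$ with $n_2$ vertices, so that $\sum_{i=1}^{mn} n_i = mn\, n_2$.

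Write $A = \{a_1,\ldots,a_m\}$ and $B = \{b_1,\ldots,b_n\}$ for the two parts of $K_{m,n}$, and for the edge $a_p b_q$ let $H_{(p,q)}$ be its attached graph, every vertex of which is joined to both $a_p$ and $b_q$. I would bound the diameter by a short case analysis over the three types of vertex pairs. First, two vertices of $K_{m,n}$ are at distance at most $2$, since the base graph $K_{m,n}$ is an isometric-from-above subgraph and $D(K_{m,n}) \leq 2$. Second, a vertex of $K_{m,n}$ and a vertex $h \in V(H_{(r,s)})$ are at distance at most $2$: the vertex $h$ is adjacent to $a_r$ and to $b_s$, while every vertex of $A$ is adjacent to $b_s$ and every vertex of $B$ is adjacent to $a_r$ (complete bipartiteness), so a path of length at most $2$ always exists.

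The decisive case is a pair $h \in V(H_{(r,s)})$ and $h' \in V(H_{(r',s')})$. If the two edges share an endpoint, say $r = r'$, then $h$ and $h'$ are both adjacent to $a_r$ and the distance is at most $2$; the same holds when $s = s'$, and also when the two vertices lie in a common $H_i$. The only remaining subcase is $r \neq r'$ and $s \neq s'$, and here I would exploit the \emph{completeness} of the bipartite base: since $a_r \in A$ and $b_{s'} \in B$, the edge $a_r b_{s'}$ is present in $K_{m,n}$, so $h - a_r - b_{s'} - h'$ is a walk of length $3$. Hence every pair of $H$-vertices is at distance at most $3$ as well.

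Combining the three cases yields $D(K_{m,n} \diamond (H_1,\ldots,H_{mn})) \leq 3$, and the conclusion follows as outlined. The point requiring care — and the reason Lemma~\ref{D} alone does not suffice, since it only gives the weaker bound $D(K_{m,n}) + 2 = 4$ — is precisely this last subcase: one must use that any two edges of $K_{m,n}$ can be joined by a \emph{length-one} cross path (an $A$-endpoint of one edge to a $B$-endpoint of the other), rather than the length-two path a generic diameter-$2$ base graph would force, and it is this that pins the diameter down to $3$ instead of $4$. I would also remark on the degenerate cases $m = 1$ or $n = 1$, where all edges share the unique vertex of the smaller side so the diameter drops to $2$; the formula $m + n + \sum_{i=1}^{mn} n_i$ remains valid there as well.
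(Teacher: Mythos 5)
Your proof is correct and takes the same route as the paper: both reduce the theorem to the single fact that $D(K_{m,n} \diamond (H_1, \ldots, H_{mn})) \leq 3$, whence every pair of vertices must receive distinct colors and the chromatic number equals the total vertex count $m+n+\sum_{i=1}^{mn} n_i$. The difference is that the paper merely asserts this diameter bound in one line, whereas you actually prove it — including the pertinent observation that Lemma~\ref{D} alone gives only $D(K_{m,n})+2=4$, so one must exploit the completeness of the bipartite base to connect two $H$-vertices attached to vertex-disjoint edges by a path of length $3$; your case analysis fills in exactly the justification the paper omits.
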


\begin{proof}
The result follows from the fact that $D(K_{m,n} \diamond  (H_1, ..., H_{mn}))\leqslant 3$.
\end{proof}

\section{Domination and Independence Numbers of $G\diamond (H_1, ..., H_m)$}
Let $G$ be a graph with $|E(G)|=m$. The aim of this section is to compute the
 domination and independence numbers of $G\diamond (H_1, ..., H_m)$.  
 To present this result, we need some definitions. Recall that a \emph{vertex cover}
  of a graph $G$ is a set of vertices such that each edge of $G$ is incident to 
  at least one vertex of the set. The minimum cardinality of this set is the 
  \emph{vertex covering number} which is denoted by $\beta (G)$. 
  For every $e_i=u_iv_i \in E(G)$, $1 \leq i \leq m$, by $e_i+H_i$ 
  we denote the subgraph of $G \diamond (H_1, ..., H_m)$ which obtained
   by joining two ends of the edge $e_i$ with all vertices of $H_i$, 
   where $H_i$ is a graph corresponding to the edge $e_i$ of $G$.

\begin{theorem}
\label{Thm:a}
Suppose  $G$ is a  connected graph with $m$ edges and  
$H_i, 1\leq i\leq m$ are graphs. Then $D\subseteq V(G \diamond (H_1, ..., H_m))$
 is a dominating set of $G \diamond (H_1, ..., H_m)$ if and only if $V(e_i+H_i)\cap D$
  is a dominating set of $e_i+H_i$ for every $e_i\in E(G)$.
\end{theorem}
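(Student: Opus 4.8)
The plan is to reduce the global domination condition to the local structure of the subgraphs $e_i+H_i$, exploiting the asymmetry between the two kinds of vertices of $\Gamma:=G\diamond(H_1,\dots,H_m)$. Throughout I write $e_i=u_iv_i$ and $D_i:=V(e_i+H_i)\cap D$, and I record the two neighbourhood facts on which the argument rests. First, for a vertex $h\in V(H_i)$ its neighbours in $\Gamma$ are exactly its neighbours inside $H_i$ together with $u_i$ and $v_i$; hence the closed neighbourhood of $h$ is contained in $V(e_i+H_i)$. Second, every vertex of $\Gamma$ lies in at least one subgraph $e_i+H_i$: the vertices of the $H_i$'s trivially, and each $v_j\in V(G)$ because $G$ is connected and has at least one edge, so $v_j$ is an endpoint of some edge. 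I will also use that each $e_i+H_i$ is a subgraph of $\Gamma$, so any adjacency holding in $e_i+H_i$ holds in $\Gamma$.

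For the \emph{if} direction, I would assume each $D_i$ dominates $e_i+H_i$ and take $w\in V(\Gamma)\setminus D$. By the second fact $w\in V(e_i+H_i)$ for some $i$, and then $w\notin D_i$; since $D_i$ dominates $e_i+H_i$ there is $d\in D_i$ adjacent to $w$ in $e_i+H_i$. As $D_i\subseteq D$ and $e_i+H_i$ is a subgraph, $d\in D$ dominates $w$ in $\Gamma$, so $D$ is dominating.

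For the \emph{only if} direction, I would assume $D$ dominates $\Gamma$, fix an edge $e_i$, and verify that $D_i$ dominates every vertex of $e_i+H_i$. If $w\in V(H_i)\setminus D$, then by the first fact all $\Gamma$-neighbours of $w$ already lie in $V(e_i+H_i)$; a $\Gamma$-neighbour $d\in D$ of $w$ therefore lies in $D_i$ and is adjacent to $w$ inside $e_i+H_i$. The only remaining vertices are the endpoints $u_i,v_i$, and these are the crux. I would split into cases: if $V(H_i)\cap D\ne\emptyset$, then such a vertex is adjacent to both $u_i$ and $v_i$ inside $e_i+H_i$, so both endpoints are dominated by $D_i$; if instead $V(H_i)\cap D=\emptyset$, then, $H_i$ being nonempty, any $h\in V(H_i)$ satisfies $h\notin D$ and, by the first fact, can only be dominated by $u_i$ or $v_i$, so $u_i\in D$ or $v_i\in D$, and the endpoint lying in $D$ dominates the other one through the edge $e_i$. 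In either case $D_i$ dominates $u_i$ and $v_i$, which finishes the direction.

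The hard part is precisely the endpoint vertices in the \emph{only if} direction: they are shared by all subgraphs $e_k+H_k$ incident to them, so global domination of $u_i$ could a priori be supplied by a vertex lying outside $e_i+H_i$, and one cannot transfer it directly. The observation that rescues the statement is that the vertices of $H_i$ can only be dominated from within $e_i+H_i$ (their closed neighbourhood is trapped there), so the mere requirement that they be dominated pins an endpoint of $e_i$ into $D$ whenever $D$ avoids $V(H_i)$ entirely. This is also the point where I would note that nonemptiness of each $H_i$ is needed: if some $H_i$ were empty, the subgraph $e_i+H_i$ would reduce to the bare edge $e_i$ and the equivalence could fail.
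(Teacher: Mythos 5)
Your proof is correct and follows essentially the same route as the paper's: both reduce global domination to the local subgraphs $e_i+H_i$ via the key fact that the closed neighbourhood of every vertex of $H_i$ is contained in $V(e_i+H_i)$, and both use connectivity of $G$ only to ensure every vertex of $G$ is an endpoint of some edge, hence covered by some $e_i+H_i$. Your write-up is in fact slightly more complete than the paper's: in the forward direction, when $u_i,v_i\notin D$, the paper only verifies that the vertices of $V(H_i)\setminus D$ are dominated and never explicitly checks that $u_i$ and $v_i$ themselves are dominated inside $e_i+H_i$ (tacitly using that $H_i$ is nonempty), a point you address explicitly in your case analysis and in your closing remark.
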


\begin{proof}
Let $D$ be a dominating set in $G \diamond (H_1, ..., H_m)$ and
 $e_i=u_iv_i\in E(G)$. If $u_i$ (or $v_i$) belongs to $D$, then 
 $\{u_i\}$ (or $\{v_i\}$) is a dominating set of $e_i+H_i$,
  i.e. $V(e_i+H_i)\cap D$ is a dominating set of $e_i+H_i$. 
  Suppose that $D$ consists of no vertices of $u_i$ and $v_i$.
   Let $x\in V(e_i+H_i)\setminus \{D\cup \{u_i, v_i\}\}$.
Since $D$ is a dominating set of $G \diamond (H_1, ..., H_m)$,
 there exists $y\in D$ such that $xy \in E(G \diamond (H_1, ..., H_m))$. 
 Clearly, $y\in V(H_i+e_i)\cap D$ and $xy\in E(H_i+e_i)$.
  It follows that $V(H_i+e_i)\cap D$ is a dominating set of $e_i+H_i$.

Conversely, suppose that $V(H_i+e_i)\cap D$ is a dominating set of 
$e_i+H_i$ for every $e_i\in E(G), 1\leq i\leq m$. Since $G$ is connected,
 it immediately concludes that $D$ is a dominating set of $G \diamond (H_1, ..., H_m)$.
\end{proof}

\begin{theorem}
\label{Thm:b}
Let $G$ be a connected graph and $H_i, 1 \leq i\leq |E(G)|$ be arbitrary graphs. Then
$\gamma(G \diamond (H_1, ..., H_m))= \beta(G).$
\end{theorem}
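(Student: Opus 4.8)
The plan is to prove the two inequalities $\gamma(G \diamond (H_1,\ldots,H_m)) \le \beta(G)$ and $\gamma(G \diamond (H_1,\ldots,H_m)) \ge \beta(G)$ separately, in both cases leaning on the characterization of dominating sets supplied by Theorem \ref{Thm:a}. The starting observation is that for each edge $e_i = u_iv_i$ a single endpoint already dominates the subgraph $e_i + H_i$: indeed $u_i$ is adjacent to $v_i$ and, by the definition of the generalized edge corona, to every vertex of $H_i$, so $\{u_i\}$ (and likewise $\{v_i\}$) is a dominating set of $e_i + H_i$. Thus $\gamma(e_i + H_i) = 1$ for every $i$, and cheap domination of the whole product should come from reusing the vertices of $G$ across the edges that meet them.

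For the upper bound I would take a minimum vertex cover $C$ of $G$, so $|C| = \beta(G)$, and regard $C$ as a subset of $V(G) \subseteq V(G \diamond (H_1,\ldots,H_m))$. Since $C$ is a vertex cover, every edge $e_i$ has at least one endpoint in $C$; by the observation above that endpoint dominates $e_i + H_i$, so $V(e_i+H_i)\cap C$ is a dominating set of $e_i + H_i$ for every $i$. Theorem \ref{Thm:a} then guarantees that $C$ is a dominating set of $G \diamond (H_1,\ldots,H_m)$, whence $\gamma(G \diamond (H_1,\ldots,H_m)) \le |C| = \beta(G)$.

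For the lower bound, let $D$ be a minimum dominating set and split the edges of $G$ into those $e_i$ with $u_i \in D$ or $v_i \in D$ (already covered by $D \cap V(G)$) and the remaining set $F$ of edges having neither endpoint in $D$. For $e_i \in F$, Theorem \ref{Thm:a} forces $V(e_i+H_i)\cap D \subseteq V(H_i)$ to dominate $e_i+H_i$; in particular this intersection is nonempty, so $|D \cap V(H_i)| \ge 1$. Because the copies $H_i$ are pairwise vertex-disjoint, the sets $D \cap V(H_i)$ over $e_i \in F$ are disjoint, giving $\sum_{e_i \in F} |D \cap V(H_i)| \ge |F|$. I would then set $C := (D \cap V(G)) \cup \{\text{one endpoint of each edge of } F\}$, which is a vertex cover of $G$ (the first part covers all edges outside $F$, the chosen endpoints cover $F$) of size at most $|D \cap V(G)| + |F| \le |D \cap V(G)| + \sum_i |D \cap V(H_i)| = |D|$, using that $V(G)$ and the $V(H_i)$ partition the vertex set of the product. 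Hence $\beta(G) \le |C| \le |D| = \gamma(G \diamond (H_1,\ldots,H_m))$, and combining with the upper bound yields equality.

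The main obstacle is this lower bound, and concretely the bookkeeping in the charging argument: one must check that every edge of $F$ genuinely contributes a \emph{private} vertex of $D$ lying in its own copy $H_i$, so that paying one cover-vertex per edge of $F$ against these disjoint contributions keeps $|C| \le |D|$. The disjointness of the $H_i$ together with the fact that $u_i,v_i \notin D$ compels $D$ to spend at least one vertex inside each such $H_i$, and these two facts are precisely what make the accounting tight.
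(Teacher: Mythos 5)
Your proof is correct, and it follows the same overall architecture as the paper's: both inequalities are routed through Theorem \ref{Thm:a}, and your upper bound (a minimum vertex cover $C$ meets every $e_i+H_i$ in an endpoint, which dominates that subgraph, so $C$ dominates the whole product) is identical to the paper's. The only divergence is in the lower bound. The paper runs an iterative exchange argument: it asserts that a minimum dominating set $D^*$ meets each $e_i+H_i$ either in an endpoint or in \emph{exactly one} vertex $x$ of $H_i$, replaces each such $x$ by the endpoint $u_i$, and claims the resulting set $D^*_{new}$ is still a minimum dominating set lying entirely inside $V(G)$, hence a vertex cover of $G$ of size $\geq \beta(G)$. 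You instead run a charging argument: every edge $e_i$ with neither endpoint in $D$ forces at least one vertex of $D$ inside its own copy $H_i$, these contributions are pairwise disjoint because the copies are vertex-disjoint, and so the set $(D\cap V(G))$ augmented by one endpoint per such edge is a vertex cover of $G$ of size at most $|D|$. The two arguments yield the same bound, but yours needs strictly less: you only use ``at least one'' vertex of $H_i$ where the paper claims ``exactly one'' (a minimality consequence it never actually justifies), and you never have to verify that an exchanged set still dominates the entire graph, since you construct a vertex cover directly rather than surgically modifying $D$. In that sense your bookkeeping closes small gaps that the paper's replacement procedure leaves open.
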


\begin{proof}
Let $C$ be a vertex cover of $G$. By definition, 
we can observe that the intersection of
two of the sets $V(H_i+e_i)$ and $C$ is $\{u_i\}$ or $\{v_i\}$ or
 both of them. All the previous cases are dominating set of $H_i+e_i$.
  By Theorem \ref{Thm:a}, $C$ is a dominating set of $G \diamond H$. 
  So $\gamma(G \diamond (H_1, ..., H_m))\leq |C|=\beta(G)$.

To complete the proof, let $D^*$ be a minimum dominating set of $G \diamond H$. 
By Theorem \ref{Thm:a}, $V(H_i+e_i)\cap D^*$ is a dominating set of $H_i+e_i$
 for every $e_i\in E(G)$. Therefore, $D^*$ contains either at least one of the vertices
  $u_i$ and $v_i$ or exactly one vertex $x$ of $H_i$. If $x\in D^*$,
   then without loss of generality, we replace $x$ by $u_i$ in $D^*$. 
   By continuing this process, we reach a new minimum dominating set
    of $D^*_{new}$, whose all members belong to $V(G)$. It is clear 
    that $D^*_{new}$ is also a vertex cover of $G$. Thus, 
    $\gamma(G \diamond (H_1, ..., H_m))=|D^*_{new}|\geq \beta(G)$. 
    Therefore, $\gamma(G \diamond (H_1, ..., H_m))= \beta(G)$.
\end{proof}

\begin{cor}
Let $G$ be a connected graph and $H$ be any graph. Then  the 
domination number of $G\diamond H$ is $\beta(G)$.
\end{cor}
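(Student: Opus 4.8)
The plan is to obtain this statement as an immediate specialization of Theorem \ref{Thm:b}. Recall from the introduction that the edge corona product $G \diamond H$ is by definition the generalized edge corona product $G \diamond (H_1, \ldots, H_m)$ in which every constituent graph $H_i$ is isomorphic to the single graph $H$; that is, $G \diamond H = G \diamond (H, \ldots, H)$ with $H$ repeated $m = |E(G)|$ times. Thus the corollary is nothing more than Theorem \ref{Thm:b} read in the uniform case $H_1 = \cdots = H_m = H$.

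First I would check that the hypotheses match: Theorem \ref{Thm:b} requires $G$ to be connected and the $H_i$ to be arbitrary graphs, and these are exactly the assumptions of the corollary, with the $H_i$ all taken equal to $H$. Since no additional structure on the individual $H_i$ is used anywhere in the proof of Theorem \ref{Thm:b} --- the argument rests only on Theorem \ref{Thm:a}, on the notion of a vertex cover of $G$, and on the swapping move that replaces a chosen vertex of some $H_i$ by an endpoint $u_i$ of the corresponding edge --- the uniform choice is certainly admissible. I would then simply substitute to conclude
$$\gamma(G \diamond H) = \gamma\bigl(G \diamond (H, \ldots, H)\bigr) = \beta(G),$$
the quantity $\beta(G)$ depending only on $G$ and being independent of the choice of $H$.

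There is genuinely no obstacle to overcome here, which is precisely why the statement is recorded as a corollary rather than as a theorem: all the combinatorial content --- the edge-by-edge characterization of dominating sets through the subgraphs $e_i + H_i$ in Theorem \ref{Thm:a}, and the reduction of a minimum dominating set to a vertex cover of $G$ in Theorem \ref{Thm:b} --- has already been carried out at the more general level. If I wished to make the independence of the answer from $H$ fully transparent, I could further observe that the swapping argument in Theorem \ref{Thm:b} never touches the internal edges of any $H_i$, so it is insensitive to whether the graphs $H_i$ agree with one another; hence collapsing them all to a common $H$ changes nothing in the derivation and leaves the value $\beta(G)$ untouched.
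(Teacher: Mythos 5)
Your proposal is correct and matches the paper's intent exactly: the corollary is stated without proof precisely because it is the immediate specialization of Theorem \ref{Thm:b} to the uniform case $H_1 = \cdots = H_m = H$, which is how the edge corona product $G \diamond H$ is defined. Your added observation that the argument of Theorem \ref{Thm:b} never uses any distinction between the $H_i$ is a fine sanity check but not needed beyond the substitution itself.
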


A set $S$ of vertices in a graph $G$ is an \emph{independent set} if and only
 if there is no edge in $E(G)$ between any two vertices in $S$. A maximum
  independent set is an independent set of largest possible size for  $G$. 
  This size is called the \emph{independence number} of $G$, and denoted $\alpha(G)$.

\begin{theorem}
For a connected graph $G$ and arbitrary graphs $H_i, 1\leq i\leq m$, 
$\alpha (G\diamond (H_1, ..., H_m))= \sum_{i=1}^m \alpha (H_i).$
\end{theorem}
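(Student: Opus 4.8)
The plan is to prove the identity by establishing the two inequalities separately, exploiting the fact that the copies of the $H_i$ are pairwise non-adjacent in $G \diamond (H_1, \ldots, H_m)$: the only edges leaving a copy of $H_i$ run to the two endpoints $u_i, v_i$ of the edge $e_i$ in $G$.

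For the lower bound I would exhibit an explicit independent set of the required size. Choosing a maximum independent set $S_i$ in each $H_i$ and setting $S = \bigcup_{i=1}^m S_i$, one checks that $S$ is independent: inside a fixed $H_i$ there are no edges among $S_i$ by choice, between $S_i$ and $S_j$ with $i \neq j$ there are no edges at all since distinct copies of the $H_i$ are non-adjacent, and $S$ contains no vertex of $G$. Hence $\alpha(G \diamond (H_1, \ldots, H_m)) \geq |S| = \sum_i \alpha(H_i)$, and this step is routine.

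For the upper bound, take an arbitrary independent set $I$ and write $I_G = I \cap V(G)$ and $I_i = I \cap V(H_i)$, so that $|I| = |I_G| + \sum_i |I_i|$. Each $I_i$ is independent inside $H_i$, whence $|I_i| \leq \alpha(H_i)$; the difficulty is that the term $|I_G|$ must be \emph{absorbed} rather than merely discarded. The key structural observation is twofold. First, if a vertex $v \in I_G$ is an endpoint of $e_i$, then $v$ is adjacent to every vertex of $H_i$, which forces $I_i = \emptyset$. Second, since $I_G$ is independent in $G$, no edge of $G$ has both endpoints in $I_G$, so the edge-sets incident to distinct vertices of $I_G$ are pairwise disjoint. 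Assuming each $H_i$ is nonempty (so $\alpha(H_i) \geq 1$) and using that $G$ is connected with at least one edge, so every vertex has positive degree, I would pick for each $v \in I_G$ an edge $\sigma(v)$ incident to $v$; by the disjointness observation $\sigma$ is injective, and $I_{\sigma(v)} = \emptyset$ for every $v$. Splitting the sum over indices inside and outside the image of $\sigma$ then yields $|I| = |I_G| + \sum_{i \notin \sigma(I_G)} |I_i| \leq |\sigma(I_G)| + \sum_{i \notin \sigma(I_G)} \alpha(H_i) \leq \sum_i \alpha(H_i)$, where the final step uses $\alpha(H_i) \geq 1$ on the image indices.

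I expect the only genuine obstacle to be the bookkeeping in the upper bound, namely verifying that the map $\sigma$ is well defined and injective and that the extra $|I_G|$ contribution can be paid for by the indices it forces empty. An equivalent route is an exchange argument: repeatedly replace a vertex $v \in I_G$ by a maximum independent set of a nonempty $H_i$ incident to $v$, which preserves independence, never decreases $|I|$, and strictly decreases $|I_G|$, terminating in an independent set disjoint from $V(G)$. A minor point worth flagging is the degenerate case of an empty $H_i$, where $\alpha(H_i)=0$ and the equality can fail, as already for $K_2 \diamond (K_0)$, so the hypothesis should be read as requiring each $H_i$ to be nonempty.
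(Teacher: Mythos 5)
Your proposal is correct, and your primary argument for the upper bound is genuinely different from the paper's. The paper works with a \emph{maximum} independent set $S$ and uses an exchange argument: if $v\in S\cap V(G)$ is an endpoint of $e_i$, it asserts that ``clearly $\alpha(H_i)=1$'' and then swaps $v$ for an arbitrary vertex of $H_i$, repeating until $S\subseteq \bigcup_i V(H_i)$ --- this is exactly the ``equivalent route'' you sketch at the end. (The assertion $\alpha(H_i)=1$ is true but not obvious: it needs the maximality of $S$, since $u_i\notin S$ and $S\cap V(H_i)=\emptyset$ imply that replacing $v$ by a maximum independent set of $H_i$ would enlarge $S$ if $\alpha(H_i)\geq 2$; the paper gives no justification.) Your counting argument instead takes an \emph{arbitrary} independent set $I$, pays for $|I_G|$ by the injection $\sigma$ into edge-indices with $I_{\sigma(v)}=\emptyset$, and uses $\alpha(H_{\sigma(v)})\geq 1$; this buys you a proof that never invokes maximality and sidesteps the one step in the paper that genuinely requires an argument. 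Your final flag is also well taken: the equality fails if some $H_i$ is the empty graph (already for $K_2\diamond(K_0)=K_2$), and the paper silently assumes nonemptiness when it ``adds an arbitrary vertex of $H_i$'', so reading the hypothesis as requiring each $H_i$ to have at least one vertex is necessary for both proofs.
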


\begin{proof}
Let $H_i$ be a graph corresponding to the edge $e_i=uv$. Clearly all 
vertices of $H_i$ are independent from all vertices of $H_j$, $i\neq j$. 
Let $S_{H_i}$ be a maximum independent set of $H_i$.  
Since the generalized edge corona operation on $G$ and all $H_i$ 
does not make any edge between two vertices of $S_{H_i}$, $S=\cup_{i=1}^m  S_{H_i}$
 is an independent set of $G\diamond (H_1, ..., H_m)$. 
 So $\alpha (G\diamond (H_1, ..., H_m))\geq  \sum_{i=1}^m\alpha (H_i)$.

On the other hand, let $S$ be a maximum independent set of
 $G\diamond (H_1, ..., H_m)$. If $S$ contains no vertices of $G$,
  then the proof is complete. Otherwise, consider $e_i=uv \in E(G)$
   such that $v\in S$. Clearly, in this case $\alpha (H_i)=1$ 
   and so we can remove $v$ from $S$ and add an arbitrary 
   vertex of $H_i$ to $S$. This procedure is
continued until all elements of $S$ belong to $V(H_i)$. 
So, $\alpha (G\diamond (H_1, ..., H_m))\leq   \sum_{i=1}^m \alpha (H_i)$, 
which will complete the proof.
\end{proof}

\begin{cor}
For a connected graph $G$ and an arbitrary graph $H$, 
the independence number of $G\diamond H$ is
$\alpha (G\diamond H)= m \alpha (H),$
where $m$ is the number of edges of $G$.
\end{cor}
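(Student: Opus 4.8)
The plan is to establish the two inequalities $\alpha(G\diamond(H_1,\ldots,H_m)) \ge \sum_{i=1}^m \alpha(H_i)$ and $\alpha(G\diamond(H_1,\ldots,H_m)) \le \sum_{i=1}^m \alpha(H_i)$ separately, relying on one structural fact about the generalized edge corona: inside $G\diamond(H_1,\ldots,H_m)$, a vertex of $H_i$ is adjacent only to the remaining vertices of $H_i$ and to the two endpoints of the edge $e_i$. In particular, no edge ever joins a vertex of $H_i$ to a vertex of $H_j$ when $i\neq j$, and the subgraph induced on $V(H_i)$ is exactly $H_i$. First I would record this observation, since both inequalities hinge on it.

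For the lower bound, I would pick a maximum independent set $S_{H_i}$ in each $H_i$ and form $S=\bigcup_{i=1}^m S_{H_i}$. Because the subgraph induced on each $V(H_i)$ is $H_i$ itself, each $S_{H_i}$ remains independent, and because distinct copies $H_i,H_j$ share no connecting edge, no new adjacency is created across the union. Hence $S$ is independent, giving $\alpha(G\diamond(H_1,\ldots,H_m)) \ge |S| = \sum_{i=1}^m \alpha(H_i)$.

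For the upper bound, I would begin with a maximum independent set $S$ and push all of its vertices off of $V(G)$ by a swapping argument. If $S$ meets $V(G)$, pick $v\in S\cap V(G)$; since $G$ is connected with at least one edge, $v$ is an endpoint of some edge $e_i=uv$ with $H_i$ nonempty. Every vertex of $H_i$ is adjacent to $v$, so $S\cap V(H_i)=\emptyset$, and independence forces $u\notin S$. I would then replace $v$ by an arbitrary $w\in V(H_i)$: the only neighbours of $w$ are $u$, $v$, and vertices of $H_i$, all of which lie outside the modified set, so $(S\setminus\{v\})\cup\{w\}$ is again independent of the same cardinality, with one fewer vertex of $S$ lying in $V(G)$. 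Iterating yields a maximum independent set $S'\subseteq\bigcup_i V(H_i)$; since each $S'\cap V(H_i)$ is independent in $H_i$, we conclude $|S|=|S'|=\sum_i |S'\cap V(H_i)| \le \sum_{i=1}^m \alpha(H_i)$.

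The step I expect to be the most delicate is verifying that the swap is genuinely valid and terminates correctly. One must check that the endpoint $u$ of $e_i$ is excluded before inserting $w$ (this uses $uv\in E(G)$ together with independence of $S$), that no two $G$-vertices of $S$ share an edge (again by independence), and that inserting $w\in V(H_i)$ cannot clash with vertices already placed in other copies $H_j$ (guaranteed by the no-cross-edge observation). I would also flag the standing assumption that each $H_i$ is nonempty, which is precisely what makes a replacement vertex $w$ available; otherwise the identity can fail, as when all the $H_i$ are empty and the corona collapses to $G$ itself.
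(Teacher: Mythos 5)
Your proof is correct and takes essentially the same route as the paper's: the lower bound via the union of maximum independent sets of the copies $H_i$ (using that no edge joins distinct copies), and the upper bound via the same swapping argument that trades each $G$-vertex of a maximum independent set for a vertex of an incident copy $H_i$. Your write-up is in fact somewhat more careful than the paper's, which inserts the unnecessary (and in general false) claim that $\alpha(H_i)=1$ during the swap, and which never flags the nonemptiness of the $H_i$ that your last paragraph correctly identifies as essential for the replacement vertex to exist.
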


\section{Conclusion}
 We determined some invariants of the generalized edge corona product of some graphs such as:
 \\
  the chromatic number as a particular case of $k$-distance
   chromatic number, dominating set, domination number
    and the independence number of generalized 
  edge corona product of graphs. 
  
  Furthermore, the bounds for $2-$distance chromatic 
  number and some results regarding the case $k=3$ were obtained.  
  As a consequence, these invariants for the edge corona product of two graphs
   are calculated. These experiences can help
us to reduce the problem of computing properties of big graphs 
 to the problem of computing some parameters of the factor graphs.

\smallskip

%{\bf Acknowledgment.} The authors are indebted to the referees for some helpful
% remarks leaded us to correct and improve this paper. 

%\smallskip

\end{document}